\documentclass[12pt,a4paper,reqno]{amsart}
\usepackage{amssymb}
\usepackage{amscd}

     \addtolength{\textwidth}{3 truecm}
     \addtolength{\textheight}{1 truecm}
     \setlength{\voffset}{-.6 truecm}
     \setlength{\hoffset}{-1.3 truecm}

\theoremstyle{plain}

\newtheorem{theorem}[subsection]{Theorem}
\newtheorem{proposition}[subsection]{Proposition}
\newtheorem{lemma}[subsection]{Lemma}
\newtheorem{corollary}[subsection]{Corollary}

\newtheorem{remark}[subsection]{Remark}

\newcommand\R{\mathbb{R}}
\newcommand\T{\mathbb{T}}

\newcommand\N{\mathbb{N}}

\newcommand\eps{{\varepsilon}}

\begin{document}

\title{Asymptotic decay for a one-dimensional nonlinear wave equation}

\author{Hans Lindblad}
\address{Department of Mathematics, UCSD, San Diego CA 92013-0112}
\email{lindblad@math.ucsd.edu}

\author{Terence Tao}
\address{Department of Mathematics, UCLA, Los Angeles CA 90095-1555}
\email{tao@math.ucla.edu}
\thanks{HL is supported by NSF grant DMS-0801120 and TT is supported by NSF Research Award DMS-0649473, the NSF Waterman award and a grant from the MacArthur Foundation.  We thank David Str\"utt, Jason Murphy, Qing Tian Zhang and the anonymous referee for corrections.}

\begin{abstract}  We consider the asymptotic behaviour of finite energy solutions to the one-dimensional defocusing nonlinear wave equation $-u_{tt} + u_{xx} = |u|^{p-1} u$, where $p > 1$.  Standard energy methods guarantee global existence, but do not directly say much about the behaviour of $u(t)$ as $t \to \infty$.  Note that in contrast to higher-dimensional settings, solutions to the linear equation $-u_{tt} + u_{xx} = 0$ do not exhibit decay, thus apparently ruling out perturbative methods for understanding such solutions.  Nevertheless, we will show that solutions for the nonlinear equation behave differently from the linear equation, and more specifically that we have the average $L^\infty$ decay $\lim_{T \to +\infty} \frac{1}{T} \int_0^T \|u(t)\|_{L^\infty_x(\R)}\ dt = 0$, in sharp contrast to the linear case.  An unusual ingredient in our arguments is the classical Radamacher differentiation theorem that asserts that Lipschitz functions are almost everywhere differentiable.
\end{abstract}

\maketitle

\section{Introduction}

Fix $p>1$.  We consider solutions $u: \R \times \R \to \R$ to the one-dimensional defocusing nonlinear wave equation
\begin{equation}\label{nlw}
-u_{tt} + u_{xx} = |u|^{p-1} u
\end{equation}

with the finite energy initial condition
$$\| u(0) \|_{H^1_x(\R)} + \| u_t(0) \|_{L^2_x(\R)} < \infty.$$
Standard energy methods (using the Sobolev embedding $H^1_x \subset L^\infty_x$) show that the initial value problem is locally well-posed in this energy class.  Furthermore, by using the conservation of energy\footnote{In order to justify energy conservation for solutions which are in the energy class, one can use standard local well-posedness theory to approximate such solutions by classical (i.e. smooth and compactly supported) solutions (regularising the nonlinearity $|u|^{p-1}u$ if necessary), derive energy conservation for the classical solutions, and then take strong limits.  We omit the standard details.  More generally, we shall perform manipulations such as integration by parts on finite energy solutions as if they were classical without any further comment.}
\begin{equation}\label{energy}
 E[u] = E[u(t)] := \int_\R \T_{00}(t,x)\ dx
 \end{equation}
where $\T_{00}$ is the \emph{energy density}
$$ \T_{00} := \frac{1}{2} u_t^2 + \frac{1}{2} u_x^2 + \frac{1}{p+1} |u|^{p+1}$$
it is easy to show that the $H^1_x \times L^2_x$ norm of $u(t)$ does not blow up in finite time, and that the solution to \eqref{nlw} can be continued globally in time.

In this paper we study the asymptotic behaviour of finite energy solutions $u$ to \eqref{nlw} as $t \to \pm \infty$.  Of course, from the conservation of energy \eqref{energy} we know that $u(t)$ stays bounded in $\dot H^1_x(\R) \cap L^{p+1}_x(\R)$, and thus (by the Gagliardo-Nirenberg inequality) bounded in $L^\infty_x(\R)$ for all time, but this does not settle the question of whether $\|u(t)\|_{L^\infty_x(\R)}$ exhibits any \emph{decay} as $t \to \pm \infty$.

For the linear equation $-u_{tt} + u_{xx} = 0$, the solutions are of course travelling waves $u(t,x) = f(x+t) + g(x-t)$, which do not decay along light rays $x = x_0 \pm t$. In particular, for any non-trivial linear solution, $\|u(t)\|_{L^\infty_x(\R)}$ stays bounded away from zero.  It is thus natural to ask whether the same behaviour occurs for solutions to the nonlinear equation \eqref{nlw}.  However, an easy energy argument shows that the behaviour must be slightly different.  Indeed, if we introduce the \emph{momentum density} (or \emph{energy current})
$$ \T_{01} = \T_{10} := u_t u_x$$
and the \emph{momentum current}
$$ \T_{11} := \frac{1}{2} u_t^2 + \frac{1}{2} u_x^2 - \frac{1}{p+1} |u|^{p+1}$$
we observe the conservation laws
\begin{align}
\partial_t \T_{00} &= \partial_x \T_{01}\label{enc}\\
\partial_t \T_{01} &= \partial_x \T_{11}. \label{momc}
\end{align}
From \eqref{enc} and the fundamental theorem of calculus we have
$$ \partial_t \int_{x < x_0+t} \T_{00}(t,x)\ dx = \T_{00}(t,x_0+t) + \T_{01}(t,x_0+t)$$
for all $x_0, t \in \R$.
On the other hand, from the non-negativity of $\T_{00}$ we clearly have
$$ 0 \leq \int_{x < x_0+t} \T_{00}(t,x)\ dx  \leq E[u].$$
From the fundamental theorem of calculus (and the monotone convergence theorem), we thus obtain
$$ \int_{-\infty}^\infty \T_{00}(t,x_0+t) + \T_{01}(t,x_0+t)\ dt \leq E[u]$$
for all $x_0 \in \R$.
From the pointwise inequality $\T_{00} + \T_{01} \geq \frac{1}{p+1} |u|^{p+1}$ we conclude in particular the nonlinear decay estimate
\begin{equation}\label{light1}
 \int_{-\infty}^\infty |u|^{p+1}(t,x_0+t)\ dt \leq (p+1) E[u]
\end{equation}
for any $x_0 \in \R$.  From reflection symmetry we also have
\begin{equation}\label{light2}
 \int_{-\infty}^\infty |u|^{p+1}(t,x_0-t)\ dt \leq (p+1) E[u]
\end{equation}
for any $x_0 \in \R$.  We thus see that solutions to the nonlinear equation $u$ must decay (on average, at least) along any light ray $x= x_0 \pm t$, in sharp contrast to solutions to the linear equation.  This simple calculation already reveals that the nonlinear equation has somewhat different asymptotic behaviour from the linear equation, and in particular that it is highly unlikely that one can asymptotically analyse the former as a perturbation of the latter.  This is in contrast with the one-dimensional nonlinear Klein-Gordon equation, for which the decay can be leveraged to obtain asymptotic results; see for instance \cite{lind}. Another contrast is with the local theory, which asserts that singularities for the nonlinear wave equation propagate along the same light rays as for the linear one; see \cite{reed}.

The estimates \eqref{light1}, \eqref{light2} imply that finite energy solutions $u$ cannot concentrate on light rays $\{ (t, x_0 \pm t): t \in \R \}$.  However, it is \emph{a priori} conceivable that such solutions might still concentrate on other worldlines $\{ (t,x(t)): t \in \R \}$.  Concentration on spacelike worldlines (in which $|x'(t)| > 1$) are easily ruled out by finite speed of propagation (or by a modification of the arguments used to derive \eqref{light1}, \eqref{light2}), but concentration on timelike worldlines (in which $|x'(t)|<1$) are not so obviously ruled out.  Nevertheless, we are able to rule out this scenario by the following theorem, which is the main result of this paper.

\begin{theorem}[Average $L^\infty_x$ decay]\label{main}  Let $u$ be a finite energy solution to \eqref{nlw}, with an upper bound $E[u] \leq E$ on the energy.  Then
$$ \frac{1}{2T} \int_{t_0-T}^{t_0+T} \|u(t)\|_{L^\infty_x(\R)}\ dt \leq c_{E,p}(T)$$
for all $t_0 \in \R$ and $T > 0$, where $c_{E,p}: \R^+ \to \R^+$ is a function depending only on the energy bound $E$ and the exponent $p$ such that $c_{E,p}(t) \to 0$ as $t \to \infty$.  In particular, we have
$$ \lim_{T \to +\infty} \sup_{t_0 \in \R} \frac{1}{2T} \int_{t_0-T}^{t_0+T} \|u(t)\|_{L^\infty_x(\R)}\ dt  = 0.$$
\end{theorem}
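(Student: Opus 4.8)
By time--translation invariance it suffices to bound $\frac{1}{2T}\int_{-T}^{T}\|u(t)\|_{L^\infty_x}\,dt$ by a function of $E,p,T$ alone that tends to $0$; and since $\sup_t\|u(t)\|_{L^\infty_x}\le C_{E,p}$ (a Gagliardo--Nirenberg estimate: integrating $\partial_x(|u|^{(p+3)/2})$ and using Cauchy--Schwarz gives $\|u(t)\|_{L^\infty_x}^{(p+3)/2}\lesssim_p\|u(t)\|_{L^{p+1}_x}^{(p+1)/2}\|u_x(t)\|_{L^2_x}\lesssim_{p}E$ via \eqref{energy}), it is enough to prove that for each $\delta>0$ one has $\operatorname{meas}\{t\in[-T,T]:\|u(t)\|_{L^\infty_x}>\delta\}=o_{\delta,E,p}(T)$, uniformly over finite energy solutions with $E[u]\le E$. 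By the approximation procedure already described for energy conservation we may and do take the data smooth and compactly supported, so that all manipulations below are classical.

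The first step is a \emph{bump lemma}. If $\|u(t)\|_{L^\infty_x}>\delta$, with the supremum almost attained at some $x^*(t)$, then from $u(t,x)-u(t,x^*(t))=\int_{x^*(t)}^{x}u_x(t,y)\,dy$, Cauchy--Schwarz, and $\int_\R u_x^2\le 2E$ we get $|u(t,x)|\ge\delta/2$ on the interval $J(t):=[x^*(t)-\rho,\,x^*(t)+\rho]$, $\rho:=\delta^2/(8E)$; hence $J(t)$ carries a definite amount of energy, $\int_{J(t)}\T_{00}(t,x)\,dx\ \ge\ \int_{J(t)}\tfrac1{p+1}|u|^{p+1}\,dx\ \ge\ \tfrac{2\rho}{p+1}(\delta/2)^{p+1}=:\beta>0$. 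The heart of the matter is to rule out the possibility that such fixed--width, fixed--mass bumps of large amplitude persist on a positive proportion of $[-T,T]$.

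This is where Rademacher's theorem and the energy rearrangement enter. For each mass level $m\in[0,E[u]]$ define $x_m(t)$ by $\int_{-\infty}^{x_m(t)}\T_{00}(t,x)\,dx=m$. The monotonicity $\partial_t\!\int_{x<x_0+t}\T_{00}\,dx=\T_{00}+\T_{01}\ge0$ (already used in the Introduction to derive \eqref{light1}), together with its reflected counterpart, shows that $t\mapsto x_m(t)-t$ is non-increasing and $t\mapsto x_m(t)+t$ is non-decreasing; in particular each $x_m$ is Lipschitz with constant $1$, so by \emph{Rademacher's theorem} it is differentiable a.e., and differentiating the defining relation (using \eqref{enc}) gives $\dot x_m(t)=-\T_{01}/\T_{00}$ at $(t,x_m(t))$, whence $|\dot x_m(t)|\le1$. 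Combining this with the pointwise bound $|u|^{p+1}\le(p+1)(\T_{00}-|\T_{01}|)$ (from $\T_{00}\pm\T_{01}=\tfrac12(u_t\pm u_x)^2+\tfrac1{p+1}|u|^{p+1}$) and changing variables from $x$ to $m$ at fixed $t$, the bump lemma yields, whenever $\|u(t)\|_{L^\infty_x}>\delta$,
$$\int_{m_-(t)}^{m_+(t)}\bigl(1-|\dot x_m(t)|\bigr)\,dm\ =\ \int_{J(t)}(\T_{00}-|\T_{01}|)(t,x)\,dx\ \ge\ \int_{J(t)}\tfrac1{p+1}|u|^{p+1}\,dx\ \ge\ \beta ,$$
where $[m_-(t),m_+(t)]$ is the mass interval corresponding to $J(t)$. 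Integrating in $t$ over the set where $\|u(t)\|_{L^\infty_x}>\delta$ and applying Tonelli to exchange the $t$ and $m$ integrations, it remains to bound, for each fixed $m$, the quantity $\int_{I_m}(1-|\dot x_m(t)|)\,dt$, where $I_m\subseteq[-T,T]$ is the (measurable) set of ``bad'' times $t$ for which $x_m(t)\in J(t)$ — equivalently, $|u(t,x_m(t))|\ge M(t)/2>\delta/2$. On the portions of $I_m$ where $x_m$ moves in a null direction, $x_m$ coincides with a light ray and $|u(t,x_m(t))|$ is controlled by \eqref{light1}--\eqref{light2}; on the rest one uses the monotonicity of $x_m(t)\mp t$ to confine the family of null rays meeting the corresponding worldtube, and double--counts against \eqref{light1}--\eqref{light2} once more. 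This should produce the required $o_{\delta,E,p}(T)$ bound for $\operatorname{meas}\{t\in[-T,T]:\|u(t)\|_{L^\infty_x}>\delta\}$, and hence the theorem, with $c_{E,p}(T)=\inf_{\delta>0}\bigl(\delta+C_{E,p}\,g_\delta(T)\bigr)\to0$.

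\textbf{Where the difficulty lies.} The reductions, the bump lemma, and the identification of $\dot x_m$ are routine; the genuinely hard part is the last estimate. The only inputs available there are the light--ray bounds \eqref{light1}--\eqref{light2} (which by themselves only prevent concentration along null lines, not along the timelike Lipschitz worldlines $x_m$) and the a.e.\ differentiability of the rearrangement curves supplied by Rademacher's theorem; a naive application of either gives only an $O(T)$ bound, and coaxing a genuinely sublinear--in--$T$ estimate out of them — uniformly in the solution and in the mass level $m$, i.e.\ showing that the energy cannot ``hover'' near a region of large $|u|$ — is the crux of the proof and the step I expect to require the most care.
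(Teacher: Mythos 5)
There is a genuine gap, and it sits exactly where you say it does: the final estimate $\int_{I_m}(1-|\dot x_m(t)|)\,dt=o_{\delta,E,p}(T)$ is not proved, and the tools you propose to prove it with cannot deliver it. On the portion of $I_m$ where $|\dot x_m(t)|$ is bounded away from $1$, the integrand is bounded below, so the desired bound is literally equivalent to showing that the set of times at which the (timelike-moving) rearrangement curve sits inside a region where $|u|\gtrsim\delta$ has measure $o(T)$ --- which is precisely the statement of the theorem (no concentration along timelike worldlines), so the argument is circular there. On the portion where $\dot x_m(t)=\pm1$ a.e., the curve does \emph{not} coincide with an actual light ray (that set of times can be scattered, with $x_m$ only agreeing with infinitesimal null directions at a.e.\ point of it), so the light-ray bounds \eqref{light1}--\eqref{light2}, which are statements about the straight lines $x=x_0\pm t$, do not control $|u(t,x_m(t))|$ on it. Indeed the paper's point is that \eqref{light1}--\eqref{light2} by themselves are insufficient: the missing ingredient is a new Morawetz-type estimate (Proposition \ref{parallel}) bounding $\iint|u|^{p+1}$ over parallelograms $\{|x-x_0-vt|\le R\}$ moving at an \emph{arbitrary} velocity $v$, uniformly up to the light cone, whose proof combines the energy/momentum flux identities with the null-form identity $(-\partial_{tt}+\partial_{xx})u^2=-2u_t^2+2u_x^2+2|u|^{p+1}$ and the defocusing sign. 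Your sketch contains no substitute for this estimate, and "double-counting against \eqref{light1}--\eqref{light2}" does not produce one.

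Two further points. First, your preliminary structure (bump lemma, $1$-Lipschitz energy-rearrangement curves $x_m$ with $\dot x_m=-\T_{01}/\T_{00}$ a.e., the identity $(1-|\dot x_m|)\,dm=(\T_{00}-|\T_{01}|)\,dx\ge\frac1{p+1}|u|^{p+1}\,dx$) is correct and is an interesting alternative to the paper's device, which instead tracks the near-maximum points $x_n(t)$, removes spacelike pairs by a finite-speed-of-propagation/combinatorial dichotomy, and interpolates a Lipschitz worldline through them. Second, since the theorem asserts a rate $c_{E,p}(T)$ uniform over all solutions with $E[u]\le E$, a qualitative appeal to Rademacher's theorem for the solution-dependent curves $x_m$ cannot suffice on its own; one needs either a quantitative (multiscale) Rademacher statement or an outer compactness-and-contradiction argument over sequences of solutions, which is how the paper proceeds (it uses the approximate differentiability at a single good scale $r$ to place $\gtrsim crT_n$ bump points inside a thin parallelogram of width $\sim\delta rT_n$, and then contradicts Proposition \ref{parallel}). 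If you supply an analogue of the parallelogram estimate and a quantitative differentiation step along your rearrangement curves, your approach could plausibly be completed, but as written the crux of the proof is absent.
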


The proof of this theorem will use energy estimates combined with a version of the Rademacher differentiation theorem (or Lebesgue differentiation theorem), that Lipschitz functions are almost everywhere differentiable.  The basic idea is to observe that if $u$ concentrates on a timelike worldline $\{ (t,x(t)): t \in \R \}$, then $x$ should be Lipschitz, and thus mostly differentiable.  This implies that $u$ concentrates on certain parallelograms in spacetime; we will then use energy estimates to rule out such concentration.

In principle, the decaying bound $c_{E,p}(T)$ could be made explicit, but this would require a quantitative version of the Radamacher differentiation theorem.  Such results exist (see \cite{tao:besi} or \cite[Section 2.4]{structure}), but they are fairly weak (involving the inverse tower exponential function $\log_*$). Presumably a more refined argument than the one given in this paper would give better bounds.  For instance, it is plausible to conjecture that $\|u(t)\|_{L^\infty_x(\R)}$ should decay at a polynomial rate in $t$, at least in the perturbative regime when $u$ is small.

We remark that our methods do not seem to give any precise asymptotics for the solution.  Of course Theorem \ref{main} indicates that the solution will not scatter to a linear solution, but it is not clear what the solution scatters to instead, even in the perturbative regime.  It may be that techniques from nonlinear geometric optics could be useful to settle this question, but the extremely weak decay of the solution means that it would be very difficult for these methods to be made rigorous, at least until one can improve the results of Theorem \ref{main} significantly.

\section{Energy estimates}

In this section we derive the basic energy estimates needed to establish Theorem \ref{main}.
Henceforth we fix $p$ and the finite energy solution $u$.  We adopt the notation $X \lesssim Y$ or $X = O(Y)$ to denote the estimate $|X| \leq CY$, where $C$ can depend on $p$ and the energy bound $E$.  Thus from energy conservation we obtain the bounds
\begin{equation}\label{energy-cons}
 \int_\R |u_t|^2(t,x) + |u_x|^2(t,x) + |u|^{p+1}(t,x)\ dx \lesssim 1
\end{equation}
for all $t$.

\begin{lemma}[H\"older continuity]  For all $t,x,t',x' \in \R$ we have the pointwise bound
\begin{equation}\label{ubound}
u(t,x) = O(1)
\end{equation}
and the H\"older continuity property
\begin{equation}\label{uholder}
u(t,x) - u(t',x') = O(|t-t'|^{1/2} + |x-x'|^{1/2}).
\end{equation}
\end{lemma}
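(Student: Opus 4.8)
The plan is to obtain both statements from the conserved energy \eqref{energy-cons} by controlling $u$ via one-dimensional Sobolev-type estimates on the space slices $\{t\} \times \R$ and on the time lines $\R \times \{x\}$, exploiting that the equation \eqref{nlw} lets us trade $t$-regularity for $x$-regularity and vice versa.

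First I would prove the pointwise bound \eqref{ubound}. Fix a time $t$. From \eqref{energy-cons} we have $\int_\R |u_x|^2(t,x)\,dx \lesssim 1$ and $\int_\R |u|^{p+1}(t,x)\,dx \lesssim 1$. By the Gagliardo--Nirenberg (or fundamental-theorem-of-calculus) inequality in one dimension, $\|u(t)\|_{L^\infty_x(\R)}^{?} \lesssim \|u(t)\|_{L^{p+1}_x}^{\theta}\|u_x(t)\|_{L^2_x}^{1-\theta}$ for a suitable exponent; concretely, writing $|u(t,x)|^{q}$ as an integral of its $x$-derivative and using Hölder, one gets $\|u(t)\|_{L^\infty_x} \lesssim 1$ with an implied constant depending only on $p$ and $E$. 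This gives \eqref{ubound} uniformly in $t$.

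Next I would establish the Hölder bound in the spatial variable: for fixed $t$, the fundamental theorem of calculus and Cauchy--Schwarz give
$$|u(t,x) - u(t,x')| = \Big| \int_{x'}^x u_x(t,y)\,dy \Big| \le |x-x'|^{1/2} \|u_x(t)\|_{L^2_x(\R)} \lesssim |x-x'|^{1/2},$$
using \eqref{energy-cons} again. For the Hölder bound in time, the natural move is to use the analogous estimate $|u(t,x) - u(t',x)| \le |t-t'|^{1/2}\|u_t(\cdot,x)\|_{L^2_{[t',t]}}$ — but this requires an $L^2$-in-time bound on $u_t$ at a fixed spatial point, which is not immediately among our conserved quantities. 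The cleanest fix is to note that by finite speed of propagation the value $u(t,x)$ depends only on data in the backward light cone, and one can instead integrate $\partial_s u$ along a characteristic (say $s \mapsto (s, x+ (t-s))$ or a short timelike segment) and again apply Cauchy--Schwarz together with the fact, following from \eqref{enc}--\eqref{momc} as in the derivation of \eqref{light1}, that $u_t \pm u_x$ is square-integrable in $s$ along any such line with a bound depending only on $E$. Combining the spatial and temporal estimates by the triangle inequality $|u(t,x)-u(t',x')| \le |u(t,x)-u(t',x)| + |u(t',x)-u(t',x')|$ yields \eqref{uholder}.

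The main obstacle is the temporal Hölder estimate: unlike the spatial case, there is no fixed-$x$ conserved quantity, so one must genuinely use the structure of the equation — either the null-coordinate energy fluxes \eqref{light1}, \eqref{light2} (which control $u_t \pm u_x$ in $L^2$ along light rays emanating from $(t',x')$, giving control of $u$ along the boundary of a characteristic parallelogram and hence, via \eqref{ubound} for the interior, at $(t,x)$), or finite speed of propagation to localise and then a local energy estimate. I expect the light-ray flux bound to be the efficient route, and everything else is routine one-dimensional calculus.
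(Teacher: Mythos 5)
Your proposal is correct, and for the pointwise bound \eqref{ubound} and the spatial H\"older estimate it coincides with the paper's argument (Gagliardo--Nirenberg, then fundamental theorem of calculus plus Cauchy--Schwarz). For the temporal estimate you take a genuinely different route. The paper does in fact manufacture a fixed-$x$ bound: integrating the momentum law \eqref{momc} over the half-line $\{x<x_0\}$ gives $\partial_t \int_{x<x_0}\T_{01}\,dx = \T_{11}(t,x_0)$, and since $\bigl|\int_{x<x_0}\T_{01}\,dx\bigr|\le E$ this yields $\int_{t_1}^{t_2}\T_{11}(t,x_0)\,dt=O(1)$; combining with \eqref{ubound} (and the harmless reduction $t_2=t_1+O(1)$) to absorb the $-\tfrac{1}{p+1}|u|^{p+1}$ part of $\T_{11}$ gives $\int_{t_1}^{t_2}u_t(t,x_0)^2\,dt=O(1)$, after which FTC and Cauchy--Schwarz finish. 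Your route instead exploits the null flux computation behind \eqref{light1}: since $\T_{00}+\T_{01}=\tfrac12(u_t+u_x)^2+\tfrac1{p+1}|u|^{p+1}$, that computation gives $\int_\R (u_t+u_x)^2(s,x_0+s)\,ds\lesssim 1$ for every $x_0$, so integrating $\partial_s u$ along the characteristic from $(t',x')$ to $(t,x'+(t-t'))$ and applying Cauchy--Schwarz gives $O(|t-t'|^{1/2})$, and the spatial estimate then corrects the displacement, which is at most $|x-x'|+|t-t'|$. Both arguments are sound; yours avoids the reduction $t_2=t_1+O(1)$ and the use of \eqref{ubound} to discard the potential term, while the paper's gives directly the statement that $u_t\in L^2_t$ on each vertical line $x=x_0$, which is arguably the more natural fixed-$x$ analogue of \eqref{energy-cons}. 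One small remark: the aside in your sketch about controlling $u$ on the boundary of a characteristic parallelogram ``via \eqref{ubound} for the interior'' is unnecessary and a bit misleading; the clean version is exactly the null-segment-plus-spatial-correction decomposition you also describe.
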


\begin{proof}
The bound \eqref{ubound} follows immediately from \eqref{energy-cons} and the Gagliardo-Nirenberg inequality.
Using the bound on $|u_x|^2$ in \eqref{energy-cons} together with the fundamental theorem of calculus and the Cauchy-Schwarz inequality, we also have the spatial H\"older continuity bound
$$ u(t,x) - u(t,x') = O(|x-x'|^{1/2}).$$
Thus to prove \eqref{uholder} it will suffice to show that
\begin{equation}\label{utt}
u(t_1,x_0) - u(t_2,x_0) = O( (t_2-t_1)^{1/2} )
\end{equation}
for all $t_2 > t_1$.  In view of \eqref{ubound} we may also assume $t_2 = t_1+O(1)$.

Fix $t_1,t_2$.  From \eqref{momc} and the fundamental theorem of calculus we have
$$ \partial_t \int_{x < x_0} \T_{01}(t,x)\ dx = \T_{11}(t,x_0)\ ;$$
integrating this in time and using \eqref{energy-cons} we obtain the bounds
$$ \int_{t_1}^{t_2} \T_{11}(t,x_0)\ dt = O(1).$$
Combining this with \eqref{ubound} we conclude
$$ \int_{t_1}^{t_2} u_t(t,x_0)^2\ dt = O(1)$$
and \eqref{utt} follows from the fundamental theorem of calculus and Cauchy-Schwarz.
\end{proof}

Now we prove a more advanced energy estimate.

\begin{proposition}[Nonlinear energy decay in a parallelogram]\label{parallel}  Let $T \geq R \geq 1$, let $x_0,t_0 \in \R$, and let $v \in \R$ be a velocity.  Then we have
\begin{equation}\label{t0t}
\int_{t_0-T}^{t_0+T} \int_{x_0+vt-R}^{x_0+vt+R} |u(t,x)|^{p+1}\ dx dt \lesssim R^{1/2} T^{1/2} + \frac{T}{R}.
\end{equation}
\end{proposition}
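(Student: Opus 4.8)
The plan is to use a multiplier/energy-flux argument adapted to the velocity $v$, measuring the energy that passes through the slanted strip $\{x_0 + vt - R < x < x_0 + vt + R\}$. First I would change variables (or equivalently, contract the energy-momentum tensor $\T_{\alpha\beta}$ against a suitable vector field) so that the geometry becomes that of a Lorentz-boosted observer. Concretely, for $|v| < 1$ the natural objects are the boosted energy density $\T_{00} - v \T_{01}$ and boosted momentum density $\T_{01} - v \T_{11}$; for $|v| > 1$ one instead works with the roles of $t$ and the spatial variable swapped. In either case the divergence identities \eqref{enc}, \eqref{momc} give a conservation law of the form $\partial_t(\cdots) = \partial_x(\cdots)$ for the boosted quantities, and integrating over the region $\{x < x_0 + vt + R\}$ (or $\{x < x_0 + vt - R\}$) and in time produces, after using \eqref{energy-cons} to bound the flux terms, a bound of the shape
\[
\int_{t_0-T}^{t_0+T} \bigl( \text{boosted flux through the line } x = x_0+vt \pm R \bigr)\, dt \lesssim 1.
\]
The pointwise positivity built into $\T_{00} \pm \T_{01} \gtrsim |u|^{p+1}$ (as used to derive \eqref{light1}) should let me extract $\int |u|^{p+1}$ along each slanted light-like-ish line from this, but only when $v$ is bounded away from $\pm 1$; the boost degenerates as $|v| \to 1$, which is the source of the loss in \eqref{t0t}.

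The case $|v| \leq 1/2$, say, should be the clean one: there the boost constants are $O(1)$, and a direct flux computation through the two slanted walls $x = x_0 + vt \pm R$ gives $\int_{t_0-T}^{t_0+T}\int_{\text{strip}} |u|^{p+1}\,dx\,dt \lesssim 1$, which is much better than the claimed bound. The genuinely delicate regime is $|v|$ close to $1$ (and $|v| > 1$), where the slanted strip is nearly null. Here I would not try to get a clean flux estimate; instead I expect to combine two ingredients. The first is a crude bound coming from the $H^1$-control and Hölder continuity of Lemma: on any region of the form $\{|t - t_1| \leq R\} \times \{|x - x_1| \leq R\}$, one has $\int |u|^{p+1} \lesssim R$ trivially from $\|u(t)\|_{L^\infty} = O(1)$ and $\int_\R |u|^{p+1}(t,x)\,dx \lesssim 1$, hence $\lesssim R$ over a time-interval of length $R$ — and tiling the parallelogram by $\sim T/R$ such blocks gives the $T/R$ term. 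The second ingredient is the refined flux estimate that handles the part of the parallelogram that is "transverse enough" to the characteristics, contributing the $R^{1/2}T^{1/2}$ term via a Cauchy–Schwarz step like the one producing \eqref{utt}: one pays a factor $T^{1/2}$ from integrating $u_t^2$ or $u_x^2$ (bounded by $O(1)$ per unit time) against the width, and a factor $R^{1/2}$ from the spatial extent.

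So the structure I would aim for: (i) decompose the parallelogram $\{|t - t_0| \leq T, |x - x_0 - vt| \leq R\}$ into the union over an arithmetic progression of times $t_j = t_0 + jR$ of the $O(T/R)$ sub-parallelograms of temporal width $R$; (ii) on each sub-parallelogram, bound $\int |u|^{p+1}$ by combining the trivial $L^\infty \times L^{p+1}_x$ bound (giving the $T/R$ contribution when summed) with a boosted-energy-flux bound through the short slanted walls that, by Cauchy–Schwarz against the energy, gives an $R^{1/2}(R)^{1/2} = R$-type gain per block improved by using that over the full time interval the total flux is only $O(1)$; summing the flux contributions and optimising yields $R^{1/2}T^{1/2}$. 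The main obstacle is making step (ii) uniform in $v$, in particular covering the superluminal range $|v| > 1$ and the transition $|v| \approx 1$ without the boost parameters blowing up: the resolution is precisely that one does not need a boosted conservation law there, since the trivial tiling bound $T/R$ already dominates once the geometry degenerates, and the two bounds are glued by taking whichever is smaller — which is exactly the content of the right-hand side $R^{1/2}T^{1/2} + T/R$.
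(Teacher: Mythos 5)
There is a genuine gap, and it sits exactly where the proposition is hardest. Your plan treats the timelike regime (say $v=0$ or $|v|\leq 1/2$) as the ``clean'' case, claiming a direct flux computation through the slanted walls gives $\int\int_{\mathrm{strip}}|u|^{p+1}\,dx\,dt\lesssim 1$. No flux argument can do this: the conservation laws \eqref{enc}, \eqref{momc} are exact divergences, so integrating them (boosted or not) over the strip produces only boundary terms --- line integrals of $\T_{01}+v\T_{00}$ or $\T_{11}+v\T_{01}$ along the walls and energy integrals on the top and bottom slices --- and never an interior term in $|u|^{p+1}$. The alternative of foliating the strip by parallel lines $x=x_0+vt+s$ and using the pointwise positivity that gives \eqref{light1} also fails, because $\T_{01}+v\T_{00}=u_tu_x+\tfrac{v}{2}(u_t^2+u_x^2)+\tfrac{v}{p+1}|u|^{p+1}$ is bounded below by a positive multiple of $|u|^{p+1}$ only when $|v|\geq 1$ (take $u_t=-u_x$ to see the quadratic part go negative for $|v|<1$). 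So you have the easy and hard cases reversed: $|v|\geq 1$ is the case the paper dispatches by pointwise positivity, while the timelike case is the crux --- indeed an $O(1)$ strip bound at $v=0$ uniform in $T$ would essentially trivialize the whole paper, and is far beyond what the authors prove. Your bookkeeping for the error terms is also off: tiling into $\sim T/R$ blocks each contributing $O(R)$ gives the trivial $O(T)$, not $O(T/R)$, and the Cauchy--Schwarz heuristic for $R^{1/2}T^{1/2}$ is not a proof.

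The missing idea is the paper's null-form mechanism for generating an interior term with a sign. One sets $Q:=(-\partial_{tt}+\partial_{xx})(u^2)=-2u_t^2+2u_x^2+2|u|^{p+1}$ and integrates it against cutoffs $\chi(t/T)\chi((x-vt)/R)$; two integrations by parts put the derivatives on the cutoffs and give the $O(T/R)$ contribution. This is combined with the two cutoff flux bounds (each $O(R)$) through the algebraic identity
\begin{equation*}
(\T_{11}+v\T_{01})+v(\T_{01}+v\T_{00})+\frac{1-v^2}{4}Q=(vu_t+u_x)^2+\frac{(p-1)(1-v^2)}{2(p+1)}|u|^{p+1},
\end{equation*}
which finally exhibits $|u|^{p+1}$ with a favorable sign in the timelike regime, at the cost of a factor $1/(1-v)$; the threshold $1-v\sim R^{1/2}/T^{1/2}$ then separates a lightlike case (handled by absorbing an $O(\frac{R^{1/2}}{T^{1/2}}\T_{00})$ error into \eqref{energy-cons}, giving $R^{1/2}T^{1/2}$) from the timelike case (giving $R/(1-v)+T/R\lesssim R^{1/2}T^{1/2}+T/R$). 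Without some substitute for this bulk-positivity identity, your outline cannot reach \eqref{t0t} for any $|v|<1$ bounded away from the light cone.
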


\begin{remark} Energy conservation \eqref{energy-cons} only gives the bound of $O(T)$ for this integral, thus this proposition is non-trivial when $T$ is much larger than $R$.  A key point here is that the bounds do not blow up in the neighbourhood of the speed of light $v=1$.  It may be possible to improve the right-hand side of \eqref{t0t}, and to also control other components of the energy, but the above bound will suffice for our purposes.
\end{remark}

\begin{proof}  By translation invariance we can set $x_0=t_0=0$.  By reflection symmetry we may assume that $v \geq 0$.

Let $\chi: \R \to \R$ be a non-negative bump function supported on $[-2,2]$ which equals $1$ on $[-1,1]$, and let $\psi(x) := \int_{y<x} \chi(y)\ dy$ be the antiderivative of $\chi$.  From \eqref{momc} and integration by parts we have
$$ \partial_t \int_\R \psi(\frac{x-vt}{R}) \T_{01}(t,x)\ dx = - \frac{1}{R} \int_\R \chi(\frac{x-vt}{R}) (\T_{11}(t,x) + v \T_{01}(t,x))\ dx;$$
integrating this against $\chi(t/T)$ using \eqref{energy-cons} we conclude that
\begin{equation}\label{t1b}
 \int_\R \int_\R \chi(\frac{t}{T}) \chi(\frac{x-vt}{R}) (\T_{11}(t,x) + v \T_{01}(t,x))\ dx dt = O( R ).
\end{equation}
A similar argument using \eqref{enc} instead of \eqref{momc} yields
\begin{equation}\label{t2b}
 \int_\R \int_\R \chi(\frac{t}{T}) \chi(\frac{x-vt}{R}) (\T_{01}(t,x) + v \T_{00}(t,x))\ dx dt = O( R ).
\end{equation}
On the other hand, if we define the nonlinear null form
$$ Q :=(-\partial_{tt} + \partial_{xx}) u^2 = - 2 u_t^2 + 2 u_x^2 + 2 |u|^{p+1}$$
then from integration by parts and \eqref{ubound} we have
\begin{equation}\label{t3b}
\begin{split}
\left|\int_\R \int_\R \chi(\frac{t}{T}) \chi(\frac{x-vt}{R}) Q(t,x)\ dx dt\right|
&= \left|\int_\R \int_\R u^2(t,x) (-\partial_{tt} + \partial_{xx})(\chi(\frac{t}{T}) \chi(\frac{x-vt}{R})) \ dx dt\right| \\
&\lesssim \int_{-2T}^{2T} \int_{v-2R}^{v+2R} \frac{1}{T^2} + \frac{1}{R^2}\ dx dt \\
&\lesssim \frac{R}{T} + \frac{T}{R} \\
&\lesssim \frac{T}{R}.
\end{split}
\end{equation}

Let us compare $|u|^{p+1}$ against the quantities
\begin{align*}
\T_{11} + v \T_{01} &= \frac{1}{2} u_t^2 + v u_t u_x + \frac{1}{2} u_x^2 - \frac{1}{p+1} |u|^{p+1} \\
\T_{01} + v \T_{00} &= \frac{v}{2} u_t^2 + u_t u_x + \frac{v}{2} u_x^2 + \frac{v}{p+1} |u|^{p+1} \\
Q &= - 2 u_t^2 + 2 u_x^2 + 2 |u|^{p+1}.
\end{align*}

We divide into three cases.

{\bf Case 1: (Spacelike case) $v \geq 1$.}  In this case, we can verify the pointwise bound
$$ \frac{1}{p+1} |u|^{p+1} \leq \T_{01} + v \T_{00} $$
and so \eqref{t0t} follows immediately from \eqref{t2b} (note that $R = O( R^{1/2} T^{1/2} )$).

{\bf Case 2: (Lightlike case) $1 - \frac{R^{1/2}}{2T^{1/2}} < v < 1$.}  In this case we have the bound
$$ \frac{v}{p+1} |u|^{p+1} \leq (\T_{01} + v \T_{00}) + O( \frac{R^{1/2}}{T^{1/2}} \T_{00} )$$
and so from \eqref{t2b} and \eqref{energy-cons} we have
$$ \frac{v}{p+1} \int_\R \int_\R \chi(\frac{t}{T}) \chi(\frac{x-vt}{R}) |u(t,x)|^{p+1}\ dt dx \lesssim R  + R^{1/2} T^{1/2} $$
and \eqref{t0t} follows.

{\bf Case 3: (Timelike case) $0 \leq v \leq 1 - \frac{R^{1/2}}{2T^{1/2}}$.}  Here we use the identity
$$ (\T_{11} + v \T_{01}) + v (\T_{01} + v\T_{00}) + \frac{1-v^2}{4} Q
= (v u_t + u_x)^2 + \frac{(p-1)(1-v^2)}{2(p+1)} |u|^{p+1}.$$
Taking the indicated linear combination of \eqref{t1b}, \eqref{t2b}, \eqref{t3b} and discarding the non-negative quantity $(v u_t + u_x)^2$, we conclude that
$$ \frac{(p-1)(1-v^2)}{2(p+1)}
\int_\R \int_\R \chi(\frac{t}{T}) \chi(\frac{x-vt}{R}) |u(t,x)|^{p+1}\ dt dx
\lesssim R + \frac{1-v^2}{4} \frac{T}{R}$$
and thus (noting that $1-v^2 = (1-v)(1+v)$ is comparable to $1-v$)
$$
\int_\R \int_\R \chi(\frac{t}{T}) \chi(\frac{x-vt}{R}) |u(t,x)|^{p+1}\ dt dx
\lesssim \frac{R}{1-v} + \frac{T}{R}.$$
Since $1-v \gtrsim R^{1/2}/T^{1/2}$ by hypothesis, the claim follows.
\end{proof}

\section{Proof of Theorem \ref{main}}

We are now ready to prove Theorem \ref{main}.  Suppose that this claim failed for some $E, p$.  Carefully negating the quantifiers, we may thus find a sequence of times $T_n \to \infty$ and $t_n \in \R$, a $\delta > 0$ independent of $n$, and a family of solutions $u_n$ which uniformly obey the energy bound $E[u_n] \leq E$ such that
$$ \frac{1}{2T_n} \int_{t_n-T_n}^{t_n+T_n} \|u_n(t)\|_{L^\infty_x(\R)}\ dt \geq \delta.$$
By translating each $u_n$ by $t_n$, we may normalise $t_n=0$.

Let $n$ be large.   We will now allow our implied constants in the $\lesssim$ notation to depend on $\delta$, thus
$$ \int_{-T_n}^{T_n} \|u_n(t) \|_{L^\infty(\R)}\ dt \gtrsim T_n.$$
From this bound and \eqref{ubound}, we now conclude that the set
$$ \{ t \in [-T_n, T_n]: \|u_n(t) \|_{L^\infty(\R)} \gtrsim 1 \}$$
has Lebesgue measure $\gtrsim T_n$ (for suitable choices of implied constants).  In particular, we can find a finite set $\Delta_n \subset [-T_n,T_n]$ of times which are $1$-separated and of cardinality
$$ \# \Delta_n \gtrsim T_n$$
such that
\begin{equation}\label{utx0}
 \|u_n(t) \|_{L^\infty(\R)} \gtrsim 1
\end{equation}
for all $t \in \Delta_n$.

For each $t \in \Delta_n$, let $x_n(t) \in \R$ be a point such that $|u_n(t,x_n(t))| \geq \frac{1}{2} \|u_n(t)\|_{L^\infty(\R)}$.  From \eqref{utx0}, one has
\begin{equation}\label{utx}
|u_n(t,x_n(t))| \gtrsim 1
\end{equation}
for all $t \in \Delta_n$.

Let us say that two times $t, t' \in \Delta_n$ are \emph{spacelike} if we have
$$ |x_n(t') - x_n(t)| \geq |t-t'| + 1.$$

There is a limit as to how many spacelike pairs of times can exist:

\begin{lemma}[Finite speed of propagation]  Let $n$ be sufficiently large, and let $t_1,\ldots,t_m \in \Delta_n$ be times which are pairwise spacelike.  Then we have $m = O(1)$.
\end{lemma}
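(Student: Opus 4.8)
The plan is to realise each spacelike time as a genuinely localised ``bump'' of energy, and then to prevent pairwise spacelike bumps from proliferating by combining finite speed of propagation with Proposition~\ref{parallel}. First I would upgrade the pointwise bound \eqref{utx} to a spacetime ball: by the H\"older continuity \eqref{uholder} there is a radius $r\lesssim 1$ (depending on $\delta,p,E$), which we may take to be at most $1/4$, such that $|u_n|\gtrsim 1$, and hence $\T_{00}\ge\tfrac{1}{p+1}|u_n|^{p+1}\gtrsim 1$, throughout the ball $B_i$ of radius $r$ about $(t_i,x_n(t_i))$. Since the times are $1$-separated and each pair is spacelike with a margin $1>2r$, the balls $B_i$ are pairwise disjoint. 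Moreover, for any time $\tau$ the ``causal footprint'' $F_i(\tau):=[\,x_n(t_i)-r-|\tau-t_i|,\ x_n(t_i)+r+|\tau-t_i|\,]$ of $B_i$ satisfies $\int_{F_i(\tau)}\T_{00}(\tau)\,dx\gtrsim 1$; this is the usual finite-speed-of-propagation estimate, following from \eqref{enc} and $\T_{00}\pm\T_{01}\ge 0$, that the energy of $u_n$ in a truncated forward light cone is non-decreasing in time while that in a truncated backward light cone is non-increasing. A one-line computation with the inequality $|x_n(t_i)-x_n(t_j)|\ge|t_i-t_j|+1$ then yields the key consequence: if $t_i<\tau<t_j$ then $F_i(\tau)\cap F_j(\tau)=\emptyset$, so bumps lying on opposite sides of any reference time slice are energetically disjoint on that slice.

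I would then split according to the time-spread of the bumps. If all the $t_i$ lie in an interval of length $O(1)$ then $m=O(1)$ immediately, since the $t_i$ are $1$-separated. Otherwise the bumps span a large time interval, and since they are pairwise spacelike we have $|x_n(t_i)-x_n(t_j)|>|t_i-t_j|$ for all $i,j$, so the points $(t_i,x_n(t_i))$ lie on the graph of a function $t=T(x)$ that is $1$-Lipschitz in $x$ and takes values in $[-T_n,T_n]$. The aim is to cover this curve by a bounded number of long, thin spacetime parallelograms $P=\{\,|x-vt-c|\le R,\ |t|\le T_n\,\}$ with $R=O(1)$ and spacelike velocity $|v|\ge 1$. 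Although Proposition~\ref{parallel} is stated with the bound $R^{1/2}T^{1/2}+T/R$, in the spacelike case $|v|\ge 1$ its proof (Case~1) in fact gives the stronger, $T$-independent estimate $\iint_P|u_n|^{p+1}\,dx\,dt\lesssim R=O(1)$; since each ball $B_i$ meeting a slightly shrunken $P$ contributes $\gtrsim 1$ to this integral and the $B_i$ are disjoint, each $P$ contains only $O(1)$ of the bumps, and hence $m=O(1)$. For pieces of the curve that are too close to horizontal for Proposition~\ref{parallel} to be efficient one collapses instead to a nearly-constant-time slice and uses energy conservation \eqref{energy-cons} — equivalently, the opposite-sides disjointness above together with the $1$-separation of the times.

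The main obstacle is the covering step: organising a time-confined, $1$-Lipschitz-in-$x$ curve into $O(1)$ pieces, each of which sits within $O(1)$ of either a spacelike line or a constant-time slice. This is where the full strength of \emph{pairwise} (not merely consecutive) spacelikeness enters — the opposite-sides disjointness of footprints must be used to control the oscillation of $T$ — and the borderline case in which the best-fitting line through the bumps is nearly lightlike has to be handled with the lightlike estimate, Case~2 of Proposition~\ref{parallel}, rather than Case~1. Once such a covering is in hand, the energy bound $E$ limits the number of parallelograms, and therefore $m$.
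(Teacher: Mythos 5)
There is a genuine gap, and it sits exactly where you flag it: the covering step. Pairwise spacelikeness does \emph{not} confine the points $(t_i,x_n(t_i))$ to within $O(1)$ of boundedly many spacelike lines or constant-time slices. For example, take $K$ groups of bumps, the $k$-th group lying along a spacelike line with its own slope, the groups placed in spatial windows separated by more than $2T_n+1$: any cross-group pair is automatically spacelike (spatial separation exceeds $|t_i-t_j|+1$ since $|t_i-t_j|\le 2T_n$), within-group pairs are spacelike by construction, and the times can be kept distinct and $1$-separated; yet covering these bumps by your width-$O(1)$ parallelograms requires at least $K$ of them, and $K$ can be of size $\sim T_n$ (long zigzag graphs give similar examples). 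So the number of parallelograms cannot be bounded a priori by geometry, and your per-parallelogram estimate $\iint_P |u_n|^{p+1}\,dx\,dt \lesssim R = O(1)$ (Case 1 of Proposition~\ref{parallel}) only yields $m \lesssim$ (number of parallelograms); the only global substitute, $\int_{-T_n}^{T_n}\int_\R |u_n|^{p+1}\,dx\,dt \lesssim T_n$ from \eqref{energy-cons}, returns the trivial bound $m\lesssim T_n$. The fallback to Case 2 for nearly lightlike pieces is also quantitatively useless here: with $T\sim T_n$ and $R=O(1)$ it gives a bound of size $\sim T_n$, not $O(1)$. In short, only the energy can rule out configurations spread over many directions, and your scheme has no mechanism that converts the energy bound into a bound on the number of covering pieces.

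Ironically, your first paragraph contains the germ of a complete proof, and it is essentially the paper's route. As you use them, footprint monotonicity and disjointness across a straddling slice only produce, at any single time, two disjoint chunks of energy (footprints on the same side of the slice can overlap heavily, e.g.\ when all bumps lie along one spacelike line), hence only $E\gtrsim 1$ rather than $m=O(1)$. The fix is to run the flux argument sequentially instead of at one slice: order $t_1<\dots<t_m$, delete from spacetime the slightly widened forward light cones $\{t\ge t_j,\ |x-x_n(t_j)|\le t-t_j+\tfrac12\}$ of all the bumps, and let $\Omega$ be the complement. By the flux inequality $|\T_{01}|\le \T_{00}$ (i.e.\ \eqref{enc} plus $\T_{00}\pm\T_{01}\ge 0$, exactly your monotonicity input), the energy of $u_n(t)$ on the time slices of $\Omega$ is non-increasing, and at time $t_j$ the interval $\{|x-x_n(t_j)|\le\tfrac12\}$, which carries energy $\gtrsim 1$ by \eqref{utx} and \eqref{uholder}, is disjoint from the forward cones of $t_1,\dots,t_{j-1}$ precisely because of pairwise spacelikeness; so each successive bump permanently withdraws $\gtrsim 1$ from a reservoir of size at most $E$. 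This gives $m=O(1)$ directly, with no covering, no Proposition~\ref{parallel}, and no case analysis.
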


\begin{proof}  Without loss of generality we may assume that $t_1 < \ldots < t_m$.  Consider the spacetime region
$$ \Omega := \R \times \R \backslash \bigcup_{1 \leq j \leq m} \{ (t,x): t \geq t_j; |x-x_n(t_j)| \leq t-t_j+\frac{1}{2} \}.$$
Standard energy estimates reveal that
$$ \int_{x: (t_{j},x) \in \Omega} \T_{00}(t_j,x)\ dx + \int_{x: |x-x_n(t_j)| \leq \frac{1}{2}} \T_{00}(t_j,x)\ dx \leq \int_{x: (t_{j-1},x) \in \Omega} \T_{00}(t_{j-1},x)\ dx$$
for all $1 < j \leq m$, where $\T_{00} = \T_{00,n}$ is the energy density of $u_n$.  Iterating this and then using \eqref{energy-cons}, we conclude that
$$ \sum_{1 < j \leq m} \int_{x: |x-x_n(t_j)| \leq \frac{1}{2}} \T_{00}(t_j,x)\ dx \lesssim 1$$
and in particular that
$$ \sum_{1 < j \leq m} \int_{x: |x-x_n(t_j)| \leq \frac{1}{2}} |u_n(t_j,x)|^{p+1}\ dx \lesssim 1.$$
But from \eqref{utx}, \eqref{uholder} we see that
$$ \int_{x: |x-x_n(t_j)| \leq \frac{1}{2}} |u_n(t_j,x)|^{p+1}\ dx \gtrsim 1.$$
for each $j$, and the claim follows.
\end{proof}

We now use this lemma and some combinatorial arguments to extract a Lipschitz worldline.

\begin{corollary}[Existence of Lipschitz worldline]\label{lipcor} Let $\eps_0: (0,1] \to (0,1]$ be an arbitrary function.  Then there exists a constant $0 < c_0 = c_0(\eps_0) \leq 1$ with the following property: for all sufficiently large $n$, there exists $c_0 < c < 1$ (depending on $n$) and a subset $\Delta'_n$ of $\Delta_n$ with
$$ \# \Delta'_n \geq c T_n$$
such that we have the Lipschitz property
\begin{equation}\label{xtt}
 |x_n(t') - x_n(t)| \leq |t-t'| + \eps_0(c) T_n
\end{equation}
for all $t, t' \in \Delta'_n$.
\end{corollary}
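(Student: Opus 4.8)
The plan is to iterate the finite speed of propagation lemma via a pigeonholing/dyadic subdivision argument, using the scale $T_n$ as the natural unit. The key observation is that the previous lemma says that among $M := O(1)$ times in $\Delta_n$, one cannot have more than some absolute constant $K$ that are pairwise spacelike; equivalently, if we colour a pair $\{t,t'\}$ of times in $\Delta_n$ ``spacelike'' when $|x_n(t')-x_n(t)| \geq |t-t'|+1$, then the spacelike graph on $\Delta_n$ has bounded clique number, hence (by Ramsey-type or greedy arguments) its complement contains a large ``timelike'' clique --- but this only gives the weak bound $|x_n(t')-x_n(t)| \leq |t-t'|+1$ on a subset of size $\gtrsim T_n^{1/K}$ or so, which is far too small. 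So a direct application is not enough; instead I would run the argument at a coarser scale and bootstrap.

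First I would partition $[-T_n,T_n]$ into $N$ intervals $I_1,\dots,I_N$ of equal length $\sim T_n/N$, where $N = N(\eps_0,c)$ is a large constant to be chosen. Within each interval $I_k$, pick a representative time $\tau_k \in \Delta_n \cap I_k$ (discarding intervals that miss $\Delta_n$; since $\#\Delta_n \gtrsim T_n$ and the $\tau$'s are $1$-separated, at least a fixed proportion of intervals survive). Apply the finite speed of propagation lemma to the representatives $\tau_k$: at most $O(1)$ of them can be pairwise spacelike, so the ``timelike'' relation $|x_n(\tau_j)-x_n(\tau_k)| < |\tau_j-\tau_k|+1$ among the representatives contains a clique of size comparable to $N$ (after passing to a sub-collection of a fixed positive fraction, using that a graph on $N$ vertices with clique number $\le K$ in its complement has an independence-number lower bound --- or more simply, greedily extracting times whose worldline points stay within a light cone). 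The point is that at this coarse scale the additive constant $+1$ is negligible compared to the interval length $T_n/N$, so on the selected representatives we get $|x_n(\tau_j)-x_n(\tau_k)| \leq |\tau_j-\tau_k| + T_n/N \leq |\tau_j-\tau_k| + \eps_0(c) T_n/2$ provided $N$ is large enough in terms of $\eps_0$. Then I would throw back in \emph{all} the times of $\Delta_n$ lying in the selected intervals $I_k$: for $t\in I_j$, $t'\in I_k$ among the kept intervals, $|x_n(t)-x_n(t')| \leq |x_n(\tau_j)-x_n(\tau_k)| + (\text{two more applications of finite speed of propagation within }I_j, I_k)$, and each such within-interval jump costs at most $|I_k| \lesssim T_n/N$ by applying the lemma again to the pair $\{\tau_k,t'\}$ alone (a spacelike pair would contradict boundedness of clique number once we note a single spacelike pair is a clique of size $2$, which is fine --- so actually I need the within-interval control from a separate simple argument). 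This gives $|x_n(t)-x_n(t')| \leq |t-t'| + \eps_0(c)T_n$, and the retained set $\Delta'_n$ has size $\geq cT_n$ with $c$ a fixed fraction of $1/N$, hence $c \geq c_0(\eps_0)$ as required.

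The main obstacle I anticipate is the \emph{circularity in the choice of $c$}: the statement asks for $|x_n(t')-x_n(t)| \leq |t-t'| + \eps_0(c)T_n$ where the error $\eps_0(c)$ is allowed to depend on the final density $c$, which in turn depends on how fine a partition ($N$) we chose. The resolution is to choose parameters in the right order: first fix a target density fraction, which determines $c$ up to the combinatorial losses, then set $N = N(\eps_0(c))$ large enough; one must check this can be done consistently (a fixed-point-free argument, since larger $N$ only shrinks $c$ by a bounded factor, so a single pass suffices rather than a genuine iteration). A secondary technical point is making the ``extract a large timelike clique'' step quantitative with the correct dependence --- here one uses that the spacelike graph on the $N$ representatives has \emph{no clique of size $K+1$} where $K = O(1)$ is the constant from the finite speed of propagation lemma, so by Turán/Ramsey its complement has an independent... rather, it has a large clique: a triangle-free-type bound won't do, but since the forbidden clique size $K$ is an absolute constant, Ramsey's theorem gives a timelike clique of size $\geq c(K) N$ with $c(K) > 0$ absolute, which is exactly the fixed fraction we need. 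I would present this clique extraction as the one genuinely combinatorial lemma and keep the rest as bookkeeping with the energy lemma.
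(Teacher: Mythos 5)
There is a genuine gap at the key combinatorial step. Your plan rests on the claim that, because the spacelike graph on the $N$ representatives has clique number at most $K=O(1)$, ``Ramsey's theorem gives a timelike clique of size $\geq c(K)N$''. That is false for general graphs: a graph on $N$ vertices with no clique of size $K+1$ need only contain an independent set of size about $N^{1/K}$, and Ramsey-graph constructions show this is essentially sharp already for $K=2$ (triangle-free graphs with independence number $O(\sqrt{N\log N})$). You noted this obstruction yourself at the fine scale, but it does not disappear at the coarse scale: the fraction of representatives you can guarantee to retain decays like $N^{1/K-1}$ as $N$ grows. This breaks your proposed resolution of the circularity: you need $N \gtrsim 1/\eps_0(c)$ to absorb the interval length into the error, while the retained density satisfies $c \lesssim N^{1/K-1}$; since $\eps_0$ is an \emph{arbitrary} function (and in the application later in the paper it is taken extremely small, $\eps_0(c)=\delta(c)\,r_1(\delta(c))$), this system can have no solution --- already $\eps_0(c)=c^{2K}$ defeats it. Any correct argument must use the geometric structure of the spacelike relation, not just its bounded clique number. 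That is exactly what the paper's Dichotomy Lemma (Lemma \ref{dich}) does: a single pair violating \eqref{xtt} forces every time in a short dense interval to be spacelike to one of the two violating times, so one can pass to a subset of cardinality $\geq c\eps_0(c)T_n/16$ whose \emph{particle number strictly decreases}; iterating at most $O(1)$ times (with $c$ updated at each step, which is precisely why the corollary lets $c$ depend on $n$ and only bounds it below by $c_0(\eps_0)$) gives the conclusion without any Ramsey-type loss.

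The second gap is the step where you ``throw back in all the times of $\Delta_n$ lying in the selected intervals''. The finite speed of propagation lemma bounds only the number of \emph{pairwise} spacelike times; it does not forbid a single spacelike pair, as you observe, and the ``separate simple argument'' you defer to does not exist. Concretely, within one short interval half of the points $x_n(t)$ could sit near $0$ and the other half near $10^6\,T_n$: every mixed pair is spacelike, yet the particle number is only $2$, perfectly consistent with the lemma, and re-inserting the whole interval destroys the Lipschitz property. So one must also select \emph{within} intervals, and doing that at positive relative density is again the near/far splitting of the Dichotomy Lemma, i.e.\ the mechanism that lowers the particle number and drives the paper's iteration.
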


\begin{proof}  Fix $\eps$, and let $n$ be sufficiently large.  Define the \emph{particle number} of a set $\Delta$ to be the largest integer $m$ for which one can find pairwise spacelike times $t_1,\ldots,t_m$ in $\Delta$.  By the previous lemma, we see that $\Delta_n$ has particle number $O(1)$. The key lemma is the following:

\begin{lemma}[Dichotomy]\label{dich}  Let $\Delta' \subset \Delta_n$, $m=O(1)$ and $c > 0$ be such that
$$ \# \Delta' \geq 2c T_n$$
and $\Delta'$ has particle number at most $m$.  Suppose $n$ is sufficiently large depending on $c$. Then at least one of the following is true:
\begin{itemize}
\item[(i)] There exists a subset $\Delta'' \subset \Delta'$ of cardinality at least $cT_n$ such that \eqref{xtt} holds for all $t, t' \in \Delta''$.
\item[(ii)] There exists a subset $\Delta''' \subset \Delta'$ of cardinality at least $c \eps_0(c) T_n/16$ with particle number at most $m-1$.
\end{itemize}
\end{lemma}

Iterating this lemma at most $O(1)$ times we obtain the claim.

It remains to prove the lemma.  We subdivide the interval $[-T_n,T_n]$ into intervals $I$ of length between $\eps_0(c) T_n/4$ and $\eps_0(c) T_n/8$.  Call an interval \emph{sparse} if $\#( \Delta' \cap I ) \leq c \eps_0(c) T_n /8$, and \emph{dense} otherwise.  Observe that at most $c T_n$ elements of $\Delta'$ lie in sparse intervals.  Thus if we let $\Delta''$ denote the intersection of $\Delta'$ with the union of all the dense intervals, then $\# \Delta'' \geq c T_n$.

If $\Delta''$ obeys \eqref{xtt} then we are done.  Otherwise, we can find $t_1, t_2 \in \Delta''$ such that
$$ |x_n(t_1) - x_n(t_2)| > |t_1-t_2| + \eps_0(c) T_n.$$
The time $t_1$ must lie in some dense interval $I$.  We split $\Delta'' \cap I = \Delta'''_1 \cup \Delta'''_2$, where $\Delta'''_1$ consists of all $t \in \Delta'' \cap I$ with $|x_n(t)-x_n(t_1)| \leq \eps_0(c) T_n/2$, and $\Delta'''_2$ consists of the remainder of $\Delta'' \cap I$.  Observe from the triangle inequality (if $n$ is sufficiently large depending on $c$) that all times in $\Delta'''_1$ are spacelike with respect to $t_2$, and similarly all times in $\Delta'''_2$ are spacelike with respect to $t_1$.  Thus each of $\Delta'''_1$ and $\Delta'''_2$ can have particle number at most $m-1$.  On the other hand, by the pigeonhole principle, one of $\Delta'''_1$ and $\Delta'''_2$ must have cardinality at least $\frac{1}{2} \# (\Delta'' \cap I)$, which is at least $c \eps_0(c) T_n / 16$ since $I$ is dense.  The lemma, and hence the corollary, follows.
\end{proof}

Let $\eps_0: (0,1] \to (0,1]$ to be a function to be chosen later (one should think of $\eps_0(c)$ as going to zero very rapidly as $c \to 0$).  For any sufficiently large $n$, let $c_0, c$ and $\Delta'_n$ be as in Corollary \ref{lipcor}.

Define the function $x'_n: [-T_n,T_n] \to \R$ by
$$ x'_n(t) := \inf_{t' \in \Delta'_n} (x_n(t') + |t-t'|).$$
One easily verifies that $x'_n$ is Lipschitz with constant at most $1$. From \eqref{xtt} we also see that
\begin{equation}\label{xt-close}
|x_n(t) - x'_n(t)| \leq \eps_0(c) T_n
\end{equation}
for all $t \in \Delta'_n$.

We now apply a quantitative version of the Rademacher (or Lebesgue) differentiation theorem to ensure that $x'_n(t)$ is approximately differentiable on a large interval.

\begin{proposition}[Quantitative Rademacher differentiation theorem]  Let $\eps_1: (0,1] \to (0,1]$ be a function, and let $\delta > 0$.  Then there exists $r_1 = r_1(\eps_1,\delta) > 0$ with the following property: given any Lipschitz function $f: [-1,1] \to \R$ with Lipschitz constant at most $1$, there exists $r_1 \leq r \leq 1$ such that the set
\begin{align*}
& \{ x \in [-1,1]: \hbox{There exists } L \in \R \hbox{ such that } |\frac{f(y)-f(x)}{y-x}-L| \leq \delta \\
&\quad \hbox{ whenever } y \in [-1,1] \hbox{ is such that } \eps_1(r) \leq |y-x| \leq r \}
\end{align*}
(which, intuitively, is the set where $f$ is approximately differentiable) has Lebesgue measure at least $2-\delta$.
\end{proposition}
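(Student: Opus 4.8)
The plan is to obtain $r_1$ by a soft compactness (contradiction) argument whose only substantial input is the classical Rademacher theorem, that a Lipschitz function is differentiable almost everywhere. This produces the qualitative existence of $r_1(\eps_1,\delta)$ but no explicit rate, which is consistent with the Remark above.

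First a preliminary reduction: we may assume $\eps_1(r)\le r$ for every $r\in(0,1]$, since for any $r$ with $\eps_1(r)>r$ the constraint ``$\eps_1(r)\le|y-x|\le r$'' is never met, so the set in question is all of $[-1,1]$ and any $r_1\le r$ works. Now suppose, for contradiction, that no $r_1$ has the stated property. Negating the quantifiers (running the hypothetical $r_1$ through the sequence $1/n$), for each $n$ we obtain a Lipschitz $f_n:[-1,1]\to\R$ with Lipschitz constant $\le1$ such that for \emph{every} $r\in[1/n,1]$ the set $G_r(f_n)$ of those $x\in[-1,1]$ admitting an $L\in\R$ with $|\frac{f_n(y)-f_n(x)}{y-x}-L|\le\delta$ for all $y\in[-1,1]$ with $\eps_1(r)\le|y-x|\le r$ has measure $<2-\delta$. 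Since $G_r(\cdot)$ is unaffected by adding a constant to the function, we normalise $f_n(0)=0$, so $\|f_n\|_{L^\infty([-1,1])}\le1$; by Arzel\`a--Ascoli, after passing to a subsequence, $f_n\to f$ uniformly on $[-1,1]$ with $f$ again Lipschitz of constant $\le1$.

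The heart of the argument is to apply classical Rademacher (equivalently, the one-dimensional Lebesgue differentiation theorem) to the limit $f$: for a.e.\ $x\in[-1,1]$ the derivative $f'(x)$ exists, so the measurable quantity $\omega_\rho(x):=\sup\{|\frac{f(y)-f(x)}{y-x}-f'(x)|:y\in[-1,1],\ 0<|y-x|\le\rho\}$, which is non-increasing in $\rho$, tends to $0$ as $\rho\downarrow0$. Hence the sets $\{\omega_\rho>\delta/4\}$ decrease to a null set, so by continuity of measure we may fix one $\rho\in(0,1]$ and a set $G\subseteq[-1,1]$ with $|G|\ge2-\delta/2$ on which $\omega_\rho\le\delta/4$. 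Crucially, whenever $y\in[-1,1]$ satisfies $\eps_1(\rho)\le|y-x|\le\rho$ the denominator is $\ge\eps_1(\rho)>0$, so $|\frac{f_n(y)-f_n(x)}{y-x}-\frac{f(y)-f(x)}{y-x}|\le 2\|f_n-f\|_{L^\infty}/\eps_1(\rho)<\delta/4$ once $n$ is large (depending on $\rho,\delta$). For such $n$ and any $x\in G$, the choice $L:=f'(x)$ then witnesses $x\in G_\rho(f_n)$, so $G\subseteq G_\rho(f_n)$ and $|G_\rho(f_n)|\ge2-\delta/2>2-\delta$. But for $n$ large we also have $\rho\ge1/n$, contradicting the defining property of $f_n$.

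The only genuine ingredient is thus the a.e.\ differentiability of Lipschitz functions; the remaining steps are the standard compactness bookkeeping that makes $r_1$ independent of $f$, so there is no serious obstacle. The single point demanding care is that uniform convergence $f_n\to f$ controls the difference quotients $\frac{f_n(y)-f_n(x)}{y-x}$ only when $|y-x|$ stays bounded away from $0$ — which is exactly the role of the inner cutoff $\eps_1(r)$, and is why the statement (and this argument) would collapse without it.
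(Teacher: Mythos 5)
Your proposal is correct and is essentially the paper's own argument: negate the quantifiers, normalise and use Arzel\`a--Ascoli to extract a uniform limit $f$, apply the classical Rademacher theorem to $f$, and exploit the inner cutoff $\eps_1(r)$ to compare difference quotients of $f_n$ and $f$ under uniform convergence. The only (cosmetic) difference is the direction of the transfer --- you fix one good scale $\rho$ for $f$ and push its good set forward to $f_n$, whereas the paper pushes the smallness of the good sets onto $f$ for every $r$ and then contradicts almost-everywhere differentiability via monotone convergence --- so no further comment is needed.
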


\begin{proof}  We give an indirect ``compactness and contradiction'' proof.  Suppose for contradiction that the claim failed.  Negating the quantifiers carefully, this means that there exists a function $\eps_1: (0,1] \to (0,1]$, a $\delta > 0$, a sequence $r_n \to 0$, and a sequence $f_n: [0,1] \to \R$ of Lipschitz functions with constant at most $1$, such that the sets
\begin{align*}
&\{ x \in [-1,1]: \hbox{There exists } L \in \R \hbox{ such that } |\frac{f_n(y)-f_n(x)}{y-x}-L| \leq \delta \\
&\quad \hbox{ whenever } y \in [-1,1] \hbox{ is such that } \eps_1(r) \leq |y-x| \leq r \}
\end{align*}
have Lebesgue measure at most $2-\delta$ for all $n$ and all $r_n \leq r \leq 1$.

By translating each $f_n$ by a constant if necessary, we may normalise $f_n(0)=0$.  The Lipschitz functions then form a bounded equicontinuous family on the compact domain $[-1,1]$, and so by the Arzela-Ascoli theorem we may (after passing to a subsequence if necessary) assume that the $f_n$ converge uniformly to a limit $f$.  We conclude that the set
\begin{align*}
&\{ x \in [-1,1]: \hbox{There exists } L \in \R \hbox{ such that } |\frac{f(y)-f(x)}{y-x}-L| \leq \delta/2 \\
&\quad \hbox{ whenever } y \in [-1,1] \hbox{ is such that } \eps_1(r) \leq |y-x| \leq r \}
\end{align*}
has Lebesgue measure at most $2-\delta$ for all $0 < r \leq 1$.  On the other hand, $f$ is clearly Lipschitz with constant at most $1$, and so by the Lipschitz differentiation theorem, $f$ is differentiable almost everywhere.  In particular, the set
\begin{align*}
&\bigcup_{m=1}^\infty \{ x \in [-1,1]: \hbox{There exists } L \in \R \hbox{ such that } |\frac{f(y)-f(x)}{y-x}-L| \leq \delta/2 \\
&\quad \hbox{ whenever } y \in [-1,1] \hbox{ is such that } 0 < |y-x| \leq 2^{-m} \}
\end{align*}
has full measure in $[-1,1]$.  By the monotone convergence theorem, this implies that one of the sets in this union has measure greater than $2-\delta$.  But this contradicts the previous claim.
\end{proof}

\begin{remark}  It is also possible to give a more direct ``martingale''\footnote{Indeed, the arguments here are closely related to some classical martingale inequalities of Doob\cite{doob} and L\'epingle\cite{lep}.} or ``multiscale analysis'' proof of this proposition, which we sketch as follows.  For each $n \geq 1$, let $f_n$ be the piecewise linear continuous function which agrees with $f$ on multiples of $2^{-n}$, and is linear between such intervals.  One easily verifies that the functions $f_{n+1}-f_n$ are pairwise orthogonal in the Hilbert space $\dot H^1([-1,1])$, and thus by Bessel's inequality we have
$$ \sum_{n=1}^\infty \|f_{n+1}-f_n\|_{\dot H^1([-1,1])}^2 \leq 2.$$
Now let $F: \N \to \N$ be a function to be chosen later, and let $\sigma > 0$ be a small quantity to be chosen later.  From the pigeonhole principle, one can find $1 \leq n_0 \leq C( F, \sigma )$ such that
$$ \sum_{n=n_0}^{F(n_0)} \|f_{n+1}-f_n\|_{\dot H^1([-1,1])}^2 \leq \sigma.$$
If one then sets $r := \sigma 2^{-n_0}$, one can verify all the required claims if $\sigma$ is chosen sufficiently small depending on $\delta$, and $F$ is sufficiently rapidly growing depending on $\delta$, $\sigma$, and $\eps_0$; the quantity $L$ can basically be taken to be $f'_n(x)$.  We omit the details, but see \cite{tao:besi} for some similar arguments in this spirit.
\end{remark}

Let $\delta > 0$ be a small quantity (depending on $c$) to be chosen later, and let $\eps_1: (0,1] \to (0,1]$ be the function $\eps_1(r) := \delta r$. We let $n$ be sufficiently large, and apply the above proposition to the Lipschitz function $f = f_n: [-1,1] \to \R$ defined by $f(y) := \frac{1}{T_n} x'_n(T_ny)$.  We conclude that there exists $r_1 = r_1(\delta)$ and $r_1 < r < 1$ (depending on $\delta$ and $n$) such that the set
\begin{align*}
\{ t \in [-T_n,T_n]: &\hbox{There exists } L \in \R \hbox{ such that } |\frac{x'_n(t')-x'_n(t)}{t'-t}-L| \leq \delta \\
&\quad \hbox{ whenever } t' \in [-T_n,T_n] \hbox{ is such that } \delta r T_n \leq |t-t'| \leq r T_n \}
\end{align*}
has measure at least $(2-\delta) T_n$.

On the other hand, the set $\Delta'_n$ has cardinality at least $cT_n$.  As in the proof of Lemma \ref{dich}, we partition $[-T_n,T_n]$ into intervals $I$ of length between $rT_n/4$ and $rT_n/8$, and let $\Delta''_n$ be the portion of $\Delta'_n$ which are contained inside those intervals $I$ which are \emph{dense} in the sense that they contain at least $cr T_n/16$ elements of $\Delta'_n$.  It is easy to see that $\Delta''_n$ has cardinality at least $cT_n/2$.  Also, $\Delta''_n$ is $1$-separated.

Thus, if we let $\delta = \delta(c)$ be sufficiently small compared to $c$, we can find $t_* \in [-T_n,T_n]$ within a distance $1$ of $\Delta''_n$ and $v \in \R$ such that
\begin{align*}
&\left|\frac{x'_n(t')-x'_n(t_*)}{t'-t_*}-v\right| \leq \delta \hbox{ whenever } t' \in [-T_n,T_n] \\
\quad &\hbox{ is such that } \delta r T_n \leq |t_*-t'| \leq r T_n.
\end{align*}
Let $t_0$ be an element of $\Delta''_n$ within $1$ of $t_*$.  Applying \eqref{xt-close}, the triangle inequality, and the Lipschitz nature of $x'_n$, we conclude that
$$ x_n(t_1) = x_n(t_0) + v (t_1-t_0) + O( \delta |t_1-t_0| ) + O( \eps_0(c) T_n ) + O( 1 )$$
whenever $t_1 \in \Delta''_n$ is such that $ \delta T_n+1 \leq |t_1-t_0| \leq r T_n-1$.  
Applying the Lipschitz property again, we conclude that
$$ x_n(t_1) = x_n(t_0) + v (t_1-t_0) + O( \delta r T_n ) + O( \eps_0(c) T_n ) + O(1)$$
for all $t_1 \in \Delta''_n$ with $|t_1-t_0| \leq rT_n-1$.  
If we set $\eps_0(c) := \delta(c) r_1(\delta(c))$, and assume $n$ is sufficiently large depending on all other parameters, we thus have
$$ x_n(t_1) = x_n(t_0) + v (t_1-t_0) + O( \delta r T_n )$$
whenever $t_1 \in \Delta''_n$ and $|t_1-t_0| \leq rT_n/4$.
One should view this as an assertion that $x_n$ is approximately differentiable near $t_0$.

By definition of $\Delta''_n$, we know that $t_0$ is contained in an interval $I$ of length at most $rT_n/4$ which contains $\gtrsim cr T_n$ elements of $\Delta_n$.  We thus see that the parallelogram
$$ P := \{ (t,x): t \in I; |x - x_n(t_0) - v(t-t_0)| \leq R/2 \} $$
contains at least $\gtrsim cr T_n$ points of the form $(t, x_n(t))$ with $t \in \Delta_n$, where $R$ is a quantity of size $\sim \delta r T_n$.
On the other hand, by definition of $\Delta_n$, we have $|u_n(t,x(t))| \gtrsim 1$ for all $t \in \Delta_n$.  Applying \eqref{uholder}, we conclude that
$$ \int_P |u_n(t,x)|^{p+1}\ dt dx \gtrsim cr T_n.$$
On the other hand, from Proposition \ref{parallel} we have
$$ \int_P |u_n(t,x)|^{p+1}\ dt dx \lesssim R^{1/2} (rT_n)^{1/2} + \frac{rT_n}{R} \lesssim \delta^{1/2} r T_n + \delta^{-1}.$$
If we set $\delta$ to be sufficiently small depending on $c$, and let $n$ be sufficiently large depending on all other parameters, we obtain a contradiction as desired.  This completes the proof of Theorem \ref{main}.

\end{document}